\documentclass[12pt]{amsart}
\usepackage{amsthm,amsmath,amsfonts,amssymb,latexsym,amscd,mathrsfs,hyperref,cleveref,graphicx}
\usepackage{setspace}
\setdisplayskipstretch{2}
\usepackage{cite}

\makeatletter
\renewcommand{\pod}[1]{\allowbreak\mathchoice
  {\if@display \mkern 18mu\else \mkern 8mu\fi (#1)}
  {\if@display \mkern 18mu\else \mkern 8mu\fi (#1)}
  {\mkern4mu(#1)}
  {\mkern4mu(#1)}
}

\theoremstyle{plain}

\newtheorem{lem}{Lemma}

\newtheorem*{thm*}{Theorem}
\newtheorem*{cor*}{Corollary}
\newtheorem*{prop*}{Proposition}

\theoremstyle{definition}

\newtheorem*{exa*}{Example}

\crefname{thm}{Theorem}{theorems}
\crefname{lem}{Lemma}{lemmas}

\newcommand{\mc}{\mathcal}

\newcommand{\re}{\operatorname{Re}}

  \def \d{\delta}     \def \s{\sigma}  \def \z{\zeta}

\numberwithin{equation}{section}

\setlength{\leftmargini}{1.75em} \setlength{\leftmarginii}{1.75em}
\renewcommand{\labelenumi}{\setlength{\labelwidth}{\leftmargin}
   \addtolength{\labelwidth}{-\labelsep}
   \hbox to \labelwidth{\theenumi.\hfill}}

\begin{document}
\title[A theorem of Bombieri-Vinogradov type]{A theorem of Bombieri-Vinogradov type with few exceptional moduli}
\author{Roger Baker}
\address{Department of Mathematics\newline
\indent Brigham Young University\newline
\indent Provo, UT 84602, U.S.A}
\email{baker@math.byu.edu}

 \begin{abstract}
Let $1 \le Q \le x^{9/40}$ and let $\mc S$ be a set of pairwise relatively prime integers in $[Q,2Q)$. The prime number theorem for arithmetic progressions in the form
 \[\max_{y\le x}\ \max_{\substack{a\\
 (a,q)=1}} \Bigg|\sum_{\substack{
 n \equiv a\pmod q\\
 n \le y}} \Lambda(n) - \frac x{\phi(q)}\Bigg| <
 \frac x{\phi(q)(\log x)^A}\]
holds for all $q$ in $\mc S$ with $O((\log x)^{34+A})$ exceptions.
 \end{abstract}
 
\keywords{primes in arithmetic progressions, large values of Dirichlet polynomials}

\subjclass[2010]{Primary 11N13}

\maketitle

 \section{Introduction}\label{sec:intro}

Let $\Lambda(n)$ denote the von Mangoldt function. The prime number theorem in the form
 \[\sum_{\substack{
 n \le x\\
 n \equiv a\pmod q}} \Lambda(n) = \frac x{\phi(q)}
 \left(1 + O_A((\log x)^{-A})\right)\]
for every $A > 0$, holds for $q \le (\log x)^A$, $(a,q)=1$. The best-known average result for a set of moduli $q$ is the Bombieri-Vinogradov theorem. Let
 \begin{gather*}
E(x;q,a) = \sum_{\substack{
 n \le x\\
 n \equiv a\pmod q}} \Lambda(n)
 - \frac x{\phi(q)}\, ,\\[2mm]
 E(x,q) = \max_{\substack{
 a\\
 (a,q)=1}} |E(x; q, a)|\, , \, E^*(x,q) =
 \max_{y\le x} |E(y,q)|
 \end{gather*}
It is easy to deduce from the presentation of the Bombieri-Vinogradov theorem in \cite{dav} that 

 \[E^*(x,q) \le \frac x{\phi(q)(\log x)^A}\]
for all integers $q$ in $[Q, 2Q)$ with at most $O(Q(\log x)^{-A})$ exceptions, provided that $Q \le x^{1/2}(\log x)^{-2A-6}$.

It is of interest to restrict the size of this exceptional set further. Following Cui and Xue \cite{cuixue} we find that \textit{provided only prime moduli $q$ are considered}, the exceptional set has cardinality $O(\mc L^{C+A})$ for some absolute constant $C$ when $Q \le x^{1/5}$.

Glyn Harman has pointed out to me that one can obtain the result of \cite{cuixue} directly from Vaughan \cite[Theorem 1]{vau} with $C=3$.

In the present paper, the constant 1/5 is increased to 9/40 by adding a `Halasz-Montgomery-Huxley' bound to the tools employed in \cite{cuixue}; see \Cref{lem3} below. We also relax the primality condition a little.

 \begin{thm*}
Let $Q \le x^{9/40}$. Let $\mc S$ be a set of pairwise relatively prime integers in $[Q, 2Q)$. The number of $q$ in $\mc S$ for which
 \[E^*(x,q) > \frac x{\phi(q)(\log x)^A}\]
is $O((\log x)^{34+A})$.
 \end{thm*}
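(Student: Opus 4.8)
The plan is to replace the exceptional moduli by primitive Dirichlet characters, attach to each one a zero of the associated $L$-function close to the line $\re s=1$ via the explicit formula, and then bound the number of characters that can produce such a zero by a zero-density estimate whose proof rests on large-value bounds for Dirichlet polynomials — in particular the Halász--Montgomery--Huxley bound \Cref{lem3} — with the pairwise coprimality of the moduli controlling the off-diagonal contributions.

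First I would set up the reduction to characters. Suppose $q_1,\dots,q_R\in\mc S$ satisfy $E^*(x,q_r)>x\phi(q_r)^{-1}(\log x)^{-A}$; choose $y_r\le x$ and $a_r$ with $(a_r,q_r)=1$ attaining the corresponding maximum, expand $E(y_r;q_r,a_r)$ into Dirichlet characters modulo $q_r$, and pass to the primitive character inducing each $\chi$, which changes $\psi(y,\chi):=\sum_{n\le y}\Lambda(n)\chi(n)$ by $O((\log x)^2)$. After a pigeonhole in $\chi$ this yields, for each $r$, a primitive character $\chi_r$ of conductor $k_r$ with $1<k_r\mid q_r$, $k_r<2Q$, and $|\psi(y_r,\chi_r)|\gg x(\log x)^{-A-1}$. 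Because the $q_r$ are pairwise coprime the conductors $k_r$ are distinct and pairwise coprime, so that for $i\ne j$ the character $\chi_i\bar\chi_j$ is \emph{non-principal} and primitive of conductor $k_ik_j<4Q^2$ — this is the one place where the hypothesis on $\mc S$ is used, and it is exactly what the primality assumption supplied in \cite{cuixue}.

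Next I would pass from large $\psi$ to a zero near the $1$-line. Applying the explicit formula for $\psi(y,\chi_r)$ truncated at height $T=(\log x)^{A+3}$, the error term $O\big(y(\log k_ryT)^2/T+\log y\big)$ is $\ll x(\log x)^{-A}$; also $\psi(y_r,\chi_r)\ll y_r$ forces $y_r\ge x^{1-\e}$ for any fixed $\e>0$. Hence $\sum_{|\gamma|<T}y_r^{\rho}\rho^{-1}\gg x(\log x)^{-A-1}$, the sum over non-trivial zeros $\rho=\beta+i\gamma$ of $L(s,\chi_r)$; since the zeros with $\beta\le 1-\delta$ contribute $\ll x^{1-\delta}(\log x)^{O(1)}$, choosing $\delta=(1+\e)(A+O(1))(\log\log x)/\log x$ and handling a possible Siegel zero separately, each $\chi_r$ must make $L(s,\chi_r)$ vanish at some $\rho$ with $\beta>1-\delta$, $|\gamma|<T$. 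Thus $R\le\sum_r N(1-\delta,T,\chi_r)$, where $N(\alpha,T,\chi)$ counts zeros of $L(s,\chi)$ in $\re s\ge\alpha$, $|\im s|\le T$.

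The hard part is then the zero-density estimate for this coprime family. I would run the Halász--Montgomery zero-detection method: a zero of $L(s,\chi)$ with $\re s\ge\alpha$ forces a large value $|M_\chi(s)|\gg 1$ of a Dirichlet polynomial $M_\chi(s)=\sum_{n\sim N}a_n\chi(n)n^{-s}$ with $|a_n|\le\tau(n)^{O(1)}$ and $N$ a suitable power of $x$ at a point with $\re s$ near $\alpha$, and the number of well-spaced points $(\chi_r,s_r)$ with $|M_{\chi_r}(s_r)|$ large is estimated by combining the mean-value theorem for Dirichlet polynomials over characters with \Cref{lem3}. In the Halász duality step one meets the off-diagonal sums $\sum_{n\sim N}(\chi_i\bar\chi_j)(n)n^{i(t_j-t_i)}$, and here coprimality makes $\chi_i\bar\chi_j$ non-principal of conductor $<4Q^2$, so Pólya--Vinogradov together with partial summation ($|t_j-t_i|<2T=(\log x)^{O(1)}$) saves a factor $Q(\log x)^{O(1)}$. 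Optimising these inputs — this is where \Cref{lem3} improves on the large-value tools of \cite{cuixue} — gives a density bound of the shape $\sum_r N(1-\delta,T,\chi_r)\ll (Q^2T)^{c\,\delta}(\log x)^{O(1)}$ for an absolute constant $c$, and $9/40$ is precisely the range of $\theta=\log Q/\log x$ for which $2\theta c\le1$, so that with $T$ polylogarithmic the main term is $(\log x)^{A+o(1)}$; keeping track of the remaining absolute logarithmic losses in the explicit formula and the density estimate produces the exponent $34+A$. The principal obstacle throughout is controlling $c$ closely enough that $2\cdot\frac{9}{40}\cdot c\le1$: everything hinges on feeding \Cref{lem3} into the large-value argument and on the coprimality-driven gain in the off-diagonal terms.
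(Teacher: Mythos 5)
Your reduction breaks down at the pigeonhole step, and the failure is by a power of $x$, not a technicality. From $E(y_r;q_r,a_r)=\phi(q_r)^{-1}\sum_{\chi\neq\chi_0}\bar\chi(a_r)\psi(y_r,\chi)+O(\cdot)$ and the hypothesis $|E|>x\phi(q_r)^{-1}\mc L^{-A}$ you may conclude $\sum_{\chi\neq\chi_0}|\psi(y_r,\chi)|\gg x\mc L^{-A}$, but this sum has $\phi(q_r)-1\asymp Q$ terms, so pigeonholing only yields a character with $|\psi(y_r,\chi_r)|\gg x\,\phi(q_r)^{-1}\mc L^{-A}\asymp x^{31/40}\mc L^{-A}$, not $\gg x\mc L^{-A-1}$. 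The latter would require $q_r$ to be polylogarithmic. With only $|\psi(y,\chi)|\gg x^{31/40}$ available, the explicit formula (after dividing by the $\ll T\log qT$ zeros up to height $T$) produces at best a zero with $\beta>1-9/40-o(1)$, nowhere near $1-O(\log\log x/\log x)$; and no zero-density estimate counts characters of conductor up to $x^{9/40}$ having a zero at height $\beta\approx 31/40$ by anything better than a positive power of $x$. So the entire chain "exceptional modulus $\Rightarrow$ zero near $\sigma=1$ $\Rightarrow$ density estimate" collapses. (The subsequent density claim, that your method yields $\sum_r N(1-\delta,T,\chi_r)\ll(Q^2T)^{c\delta}\mc L^{O(1)}$ with $2\cdot\tfrac{9}{40}\cdot c\le1$, is also asserted rather than derived, but that issue is moot given the first one.)

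The paper avoids detecting individual bad characters altogether. It keeps the factor $\phi(q)^{-1}$, sums $E^\dag(x,q)$ over $q\in\mc S$, and uses the pairwise coprimality only to guarantee that each primitive character of conductor $>1$ induces characters to at most one modulus of $\mc S$; this gives
\[\sum_{q\in\mc S}E^\dag(x,q)\ \ll\ Q\mc L^3+\frac{\log\mc L}{Q}\,S(Q),\qquad S(Q)=\sum_{q<2Q}\ \sideset{}{^*}\sum_{\chi\pmod q}\max_{y\le x}|\psi(y,\chi)|,\]
after which Markov's inequality at the threshold $x/(2\phi(q)\mc L^A)$ gives the $\mc L^{34+A}$ bound. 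The real content is thus the Vaughan-type mean value theorem $S(Q)\ll x\mc L^{34-\d}$ for $Q\le x^{9/40}$, proved not through zeros but directly: Heath-Brown's identity splits $\psi(y,\chi)$ into products of eight Dirichlet polynomials, Perron's formula moves to the half-line, and the large sieve, a fourth-moment bound, and the Hal\'asz--Montgomery--Huxley large-value estimate (\Cref{lem3}) are combined via the combinatorial \Cref{lem4}, whose ``difficult interval'' $(9/40,1/4)$ is where the exponent $9/40$ actually originates. If you want to pursue your route, you must replace the pointwise detection by a bound for this full mean value over all $\asymp Q^2$ primitive characters, and the explicit-formula error term alone (which costs $Q^2\cdot x\mc L^2/T$) already forces $T\gg Q^2$, putting the required density input far beyond what is known.
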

 
As a simple example, we may take $\mc S$ to be the set of prime powers in $[Q, 2Q)$. The constant 34 could be reduced with further effort. Constants implied by `$O$', `$\ll$' are absolute constants throughout the paper. We write $|\mc E|$ for the cardinality of a finite set $\mc E$ and
 \[\mc L = \log x.\]

We suppose, as we may, that $x$ is large.
 \bigskip

 \section{A proposition which implies the theorem}\label{sec:propimpliesthm}
 
We write
 \[\sideset{}{'}\sum_{\chi\pmod q} \ , \
 \sideset{}{^*}\sum_{\chi\pmod q}\]
for a sum respectively over non-principal characters and primitive characters $\pmod q$. For $y \le x$, let
 \[\psi(y,\chi) = \sum_{n\le y}
 \Lambda(n) \chi(n) \ , \ \psi_q(n) =
 \sum_{\substack{n\le y\\
 (n,q) =  1}} \Lambda(n).\]
We note the identity, for $(a,q)=1$,

 \begin{equation}\label{eq2.1}
\sum_{\substack{n\le y\\
n \equiv a\pmod q}} \Lambda(n) - \frac 1{\phi(q)}\, \psi_q(y) = \frac 1{\phi(q)}\ \sideset{}{'}\sum_{\chi\pmod q} \bar\chi(a) \psi(y,\chi). 
 \end{equation}

For brevity, we write $\d = 1/20$.

 \begin{prop*}
Let $Q \le x^{9/40}$. Then 
 \[S(Q) := \sum_{q < 2Q} \ \,
 \sideset{}{^*}\sum_{\chi \pmod q} \ \max_{y \le  x}
 |\psi(y, \chi)| \ll x\mc L^{34-\d}.\]
 \end{prop*}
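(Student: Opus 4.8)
The plan is to prove the proposition by the Dirichlet-polynomial method of \cite{cuixue}, strengthened at the decisive step by the Hal\'asz--Montgomery--Huxley estimate \Cref{lem3}; the deduction of the theorem from the proposition is then short. For that deduction, suppose $q\in\mc S$ is exceptional. By the prime number theorem and $\omega(q)\ll\mc L$ we have $\psi_q(y)=y+O(x\mc L^{-A-1})+O(\mc L^{2})$, so \eqref{eq2.1} gives
\[
\sideset{}{'}\sum_{\chi\pmod q}\ \max_{y\le x}|\psi(y,\chi)|\ \gg\ \frac{x}{\mc L^{A}}.
\]
Writing each non-principal $\chi\pmod q$ in terms of the primitive character $\chi^{*}$ that induces it changes the corresponding maximum by $O(\mc L^{2})$, and there are fewer than $2Q$ characters, so $\sum_{d\mid q,\ d>1}\ \sideset{}{^*}\sum_{\chi\pmod d}\ \max_{y\le x}|\psi(y,\chi)|\gg x\mc L^{-A}$, the error $O(Q\mc L^{2})$ being negligible since $Q\le x^{9/40}$. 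Now sum over the exceptional $q$: because they are pairwise coprime, each modulus $d>1$ divides at most one of them, so the sum of the left-hand sides is at most $S(Q)$; hence the number of exceptional $q$ is $\ll\mc L^{A}x^{-1}S(Q)\ll\mc L^{34+A}$ by the proposition.

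It remains to bound $S(Q)$. Let $N(V)$ be the number of pairs $(d,\chi)$ with $d<2Q$, $\chi$ primitive $\pmod d$, and $\max_{y\le x}|\psi(y,\chi)|>V$. Splitting dyadically in the size of the maximum, and using the trivial bound $N(V)\le\#\{(d,\chi)\}\ll Q^{2}\le x^{9/20}$ when $V\le x^{1/2}$, we obtain
\[
S(Q)\ \ll\ \mc L\ \sup_{x^{1/2}<V\le 2x}V\,N(V)\ +\ x^{19/20}\mc L ,
\]
so it suffices to show $V\,N(V)\ll x\,\mc L^{O(1)}$ for $x^{1/2}<V\le 2x$ (tracking all logarithmic factors then gives the precise exponent $34-\d$). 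Fix such a $V$ and a pair $(d,\chi)$ counted by $N(V)$. Applying Vaughan's identity---or Heath-Brown's identity of a suitable level---to $\psi(y,\chi)$ and subdividing all summation variables into dyadic blocks, we write $\psi(y,\chi)$ as a sum of $O(\mc L^{O(1)})$ terms, each either of Type~I,
\[
\sum_{m\le M}a_{m}\!\!\sum_{\ell\le y/m}\!\!\chi(m\ell)\qquad\big(M\le x^{\theta_{1}},\ |a_{m}|\ll\tau_{k}(m)\big),
\]
or of Type~II,
\[
\sum_{m\sim M}\ \sum_{\ell\sim L}a_{m}b_{\ell}\,\chi(m\ell)\qquad\big(ML\asymp y,\ x^{\theta_{2}}\le M\le L,\ |a_{m}|,|b_{\ell}|\ \text{divisor-bounded}\big),
\]
and at least one of these is $>V\mc L^{-O(1)}$. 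The maximum over $y$ is removed at the cost of $\mc L^{O(1)}$: in a Type~I sum the inner character sum is $\ll\min(y/m,\sqrt{d}\,\mc L)$ by P\'olya--Vinogradov, uniformly in $y$; in a Type~II sum a Perron truncation applied to the longer of the two variables replaces the partial sum by a Dirichlet polynomial evaluated on $|t|\le T$, with $T$ a fixed power of $x$. The Type~I sums cause no trouble: completing the inner sum and inserting the resulting polynomial in $m$ into the large-sieve inequality, while using $Q\le x^{9/40}$ to keep $M$ small, shows that their total contribution to $S(Q)$ is $\ll x\mc L^{O(1)}$; a Type~I sum whose smooth variable is too long for this is simply treated as a Type~II sum with $b_{\ell}\equiv1$.

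The Type~II sums are the heart of the proof and the point at which \Cref{lem3} is needed. It remains to bound, for each dyadic $M$ with $x^{\theta_{2}}\le M\le x^{1/2}$, the number $R$ of pairs $(d,\chi)$---$d<2Q$, $\chi$ primitive---for which
\[
\Big|\sum_{m\sim M}a_{m}\chi(m)m^{-it}\Big|\cdot\Big|\sum_{\ell\sim L}b_{\ell}\chi(\ell)\ell^{-it}\Big|\ >\ V\mc L^{-O(1)}\qquad(ML\asymp x)
\]
for some $|t|\le T$, and to prove $V\,R\ll x\mc L^{O(1)}$. After a dyadic split according to the sizes of the two factors, each factor is estimated by whichever is stronger of two devices: the mean-value inequality for Dirichlet polynomials over primitive characters,
\[
\sum_{d<2Q}\ \sideset{}{^*}\sum_{\chi\pmod d}\ \int_{-T}^{T}\Big|\sum_{m\sim M}a_{m}\chi(m)m^{-it}\Big|^{2}\mathrm{d}t\ \ll\ \big(Q^{2}T+M\big)\sum_{m\sim M}|a_{m}|^{2},
\]
and \Cref{lem3}, the Hal\'asz--Montgomery--Huxley bound, which caps the number of well-spaced $(d,\chi,t)$ at which a Dirichlet polynomial of given length and mean square exceeds a given height. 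Optimising over $M$ and over the split, and invoking $Q^{2}\le x^{9/20}$ repeatedly, one checks that $V\,R\ll x\mc L^{O(1)}$ in every case. This optimisation is what I expect to be the main obstacle. The mean-value inequality alone does not reach beyond $M,L\asymp x^{1/5}$---the range of \cite{cuixue}---and \Cref{lem3} is exactly what carries the argument through the intermediate range of $M$; the exponent $9/40$ is pinned down as the point where the Hal\'asz--Montgomery--Huxley contribution and the large-sieve/trivial contributions come into balance. A secondary, more routine, concern is to perform the removal of $\max_{y\le x}$, and to choose the height $T$, so as to lose only a power of $\mc L$ rather than a power of $x$.
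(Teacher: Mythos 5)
Your outline identifies the right ingredients --- a Heath-Brown/Vaughan decomposition into Dirichlet polynomials, removal of $\max_{y\le x}$ by Perron at the cost of powers of $\mc L$, a dyadic large-values framework, and the combination of the mean value theorem with the Hal\'asz--Montgomery--Huxley bound of \Cref{lem3} --- and your reduction of the Theorem to the Proposition via pairwise coprimality is essentially the paper's. But the proof of the Proposition itself has a genuine gap exactly where you flag it: ``optimising over $M$ and over the split \dots one checks that $VR\ll x\mc L^{O(1)}$ in every case'' is the entire content of the result, and it is not carried out. Moreover, the bilinear Type~I/Type~II framework you set up is too coarse to reach $9/40$. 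The paper works with the full $8$-fold product from Heath-Brown's identity and proves a combinatorial partition lemma (\Cref{lem4}): either the eight exponents split into two blocks each of size $\le 11/20$ (handled by Cauchy--Schwarz and the mean value theorem alone), or one must extract a \emph{trilinear} structure $F_i\cdot S_1\cdot S_2$ in which the singleton exponent $u_i$ avoids the ``difficult interval'' $(9/40,1/4)$ and the two blocks each have exponent sum $\le 9/20$. The singleton is then controlled by the mean value theorem when $N_i\ll x^{9/40}$ but by the \emph{fourth-moment} estimate \Cref{lem2} when $N_i>x^{1/4}$ --- which is only available because Heath-Brown's identity guarantees such a long factor has coefficients $a_n\equiv 1$. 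Your proposal never invokes a fourth-moment bound or any analogue of this three-way split, and without it the balance point falls back to something like the $x^{1/5}$ of Cui--Xue rather than $x^{9/40}$.

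Concretely, what is missing is: (i) the partition lemma with the interval $(9/40,1/4)$ and the observation that factors longer than $x^{1/4}$ are smooth; (ii) the large-values count $P$ assembled from \eqref{eq2.6}, \eqref{eq2.7} and \eqref{eq2.10} simultaneously for the three factors $U,V,W$; and (iii) the four-case H\"older optimisation (with explicit exponents such as $5/16,5/16,1/16,1/16,1/4$) that shows $UVWP\ll T^{39/40}x^{1/2}\mc L^{22-\d}$. Your heuristic that ``$9/40$ is pinned down as the point where the contributions come into balance'' is correct in spirit, but the verification is the theorem, not a routine check; as written the proposal would not yield the exponent $9/40$, nor the specific power $\mc L^{34-\d}$.
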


The Proposition implies the Theorem. For if $q \in [Q,2Q)$, an argument on page 163 of \cite{dav} yields, for $y \le x$,
 \begin{equation}\label{eq2.2}
\frac 1{\phi(q)} \Bigg(\sum_{n \le y} \Lambda(n) - \psi_q(y)\Bigg) \ll \frac{\mc L^2 \log \mc L}Q 
 \end{equation}
and
 \begin{equation}\label{eq2.3}
\frac 1{\phi(q)}\, (\psi(y, \chi_1) - \psi(y,\chi)) \ll \frac{\mc L^2 \log \mc L}Q
 \end{equation}
where $\chi$ is induced by the primitive character $\chi_1$. Let
 \[E^\dag(x,q) = \max_{y \le x} \
 \max_{(a,q) = 1} \Bigg|\sum_{\substack{
 n \le y\\
 n \equiv a\pmod q}} \Lambda(n) - \frac 1{\phi(q)}\
 \sum_{n \le y} \Lambda(n)\Bigg|.\] 
We combine all contributions to $E^\dag(x,q)$ made by an individual primitive character. We see from \eqref{eq2.1}--\eqref{eq2.3} that
 \begin{align*}
\sum_{q \in  \mc S} E^\dag(x,Q) &\ll \sum_{q \le Q} \mc L^2 \log \mc L\\[2mm]
&\qquad + \sum_{3\le  q_1\, \le Q} \ \, \sideset{}{^*}\sum_{\chi_1\pmod{q_1}} \max_{y \le x}\, |\psi(y,\chi_1)| \ \sum_{\substack{
 \frac Q{q_1} \le k < \frac{2Q}{q_1}\\[1mm]
 q_1 k \in \mc S}}\frac 1{\phi(kq_1)}\\[2mm]
&\ll Q\mc L^3 + \frac{\log\mc L}Q \ \sum_{3 \le q_1 \le Q} \ \, \sideset{}{^*}\sum_{\chi_1\pmod{q_1}} \ \max_{y \le x} |\psi(y,\chi_1)| \sum_{\substack{
\frac Q{q_1} \le k < \frac{2Q}{q_1}\\
q_1 k \in \mc S}} 1.
 \end{align*}
The inner sum is 0 or 1 by our hypothesis on $\mc S$, and we obtain
 \[\sum_{q \in \mc S} E^\dag (x,Q) \ll Q\mc L^3 +
 \frac{\log \mc L}Q\, S(Q) \ll \frac{x\mc L^{34}}Q.\]

The set $\mc A$ of $q \in [Q,2Q)$ for which
 \[E^\dag (x,q) > \frac x{2\phi(q)}\, \mc L^{-A}\]
thus has cardinality
 \[|\mc A| \ll \mc L^{34+A}.\]
For $q \in \mc S_Q -\mc A$, $y \le x$, $(a,q)=1$ we have
 \begin{align*}
\Bigg|\sum_{\substack{n \le y\\
 n \equiv a\pmod q}} &\Lambda(n) - \frac y{\phi(q)}\Bigg|\\[2mm]
&\le \frac{x\mc L^{-A}}{2\phi(q)} +\Bigg|\sum_{n \le y} \Lambda(n) - \frac y{\phi(q)}\Bigg|\\[2mm] 
&\le \frac x{\phi(q)}\, \mc L^{-A}
 \end{align*}
by the prime number theorem. This completes the proof of the theorem.

We now explain the initial stage of the proof of the proposition. For $\chi$ $\pmod q$ a primitive character, $Q \le q < 2Q$, choose $y(\chi)$ to maximize
 \[\Bigg|\sum_{n \le y}
 \Lambda(n) \chi(n)\Bigg| \quad (y\le x)\]
and $a(\chi)$ so that $|a(\chi)|=1$,
 \[a(\chi) \sum_{n \le y(\chi)} \Lambda(n)
 \chi(n) = \Bigg|\sum_{n \le y(\chi)}
 \Lambda(n) \chi(n)\Bigg|.\]
Thus
 \[S(Q) = \sum_{q < 2Q}\ \,
 \sideset{}{^*}\sum_{\chi\pmod q} \, a(\chi)
 \sum_{n \le y(\chi)} \Lambda (n)\chi(n).\]
From the discussion in Heath-Brown \cite{hb}, $S(Q)$ is a linear combination, with bounded coefficients, of $O(\mc L^8)$ sums of the form
 \[S := \sum_{q < 2Q} \ \,
 \sideset{}{^*}\sum_{\chi\pmod q} a(\chi) \
 \sum_{\substack{n_1\ldots n_8 \le y(\chi)\\[1mm]
 n_i \in I_i}} (\log n_1) \mu(n_5)\ldots \mu(n_8)
 \chi(n_1)\ldots \chi(n_8)\]
in which $I_i = (N_i,2N_i]$, $\prod\limits_i N_i \le x$ and $2N_i \le x^{1/4}$ if $i>4$. Some of the intervals $I_i$ may contain only the integer 1, and we replace these by $[1,2)$ without affecting the upper bound $\prod\limits_i N_i \le x$. Now we need only bound $S$ by $O(x\mc L^{26-\d}/Q)$.

It is convenient to get rid of the factor $\log n_1$ in $S$. We have
 \[S =\sum_{q,\chi} \int_1^{N_1} \frac 1v \
 \sum_{\substack{
 n_i \in I_i'\\[1mm]
 n_1\ldots n_8 \le y(\chi)}} \mu(n_5) \ldots \mu(n_8)\
 \chi(n_1) \ldots \chi(n_8)\, dv,\]
where $I_1 = (\max(v, N_1), 2N_1]$ and $I_i'= I_i$ for $i>1$.

Next we use Perron's formula \cite[Lemma 3.12]{titch}. Let
 \[F_j(s,\chi) = F_j(s, \chi, v) = \sum_{n\in I_j'}\
 a_j(n) \chi(n) n^{-s}\]
where $a_j(n) = 1$ $(j\le 4)$, $a_j(n) = \mu(n)$ $(j > 4)$. Then

 \begin{align*}
&\sum_{\substack{
n_i\in I_i'\\
n_1\ldots n_8 \le y(\chi)}} \mu(n_5)\ldots\mu(n_8)\chi(n_1)\ldots \chi(n_8)\\[2mm]
&\qquad =\frac 1{2\pi i} \int_{1+\mc L^{-1}-ix}^{1+\mc L^{-1}+ix} F_1(s,\chi)\ldots F_8(s,\chi)\, \frac{y(\chi)^s}s\, dx + O(\mc L^2).
 \end{align*}

We shift the path of integration to $\re(s) = 1/2$. We have
 \[\left|\prod_{j=1}^8 F_j(\s \pm ix, \chi)\right| \le
 \prod_{j=1}^8 N_j^{1-\s} \le x^{1-\s},\]
so that the integral on the horizontal segments is $O(1)$. Thus
 \begin{align*}
S &= \frac 1{2\pi i} \ \sum_{q,\chi} \int_1^{N_1} \frac 1v \int_{-x}^x\ \prod_{j=1}^8 F_j\left(\frac 12 + it, \chi\right) \ \frac{y(\chi)^{\frac 12+it}}{\frac 12 + it}\, dt\, dv  + O(Q^2\mc L^2)\\[2mm]
&\ll x^{1/2}\mc L \sum_{q,\chi,T} T^{-1} \int_{-T}^T \ \prod_{j=1}^8\ \left|F_j\left(\frac 12  + it, \chi\right)\right|dt + O(Q^2\mc L^2) 
 \end{align*}
where $T$ takes the values $2^k$, $1 \le k \le \mc L/\log 2$. Here $v$ is now fixed in $[1,N_1]$. Since $Q^4 <x$, we need only show that
 \begin{align}
\sum_{q < 2Q}\ \ \sideset{}{^*}\sum_{\chi\pmod q} \int_{-T}^T \ \prod_{j=1}^8 \ & \left|F_j\left(\frac 12 + t, \chi\right)\right|dt\label{eq2.4}\\[2mm]
& \ll T^{39/40}x^{1/2}\mc L^{25-\d}\ (1 \le T \le x). \notag
 \end{align}
 
This is done by grouping $F_1\ldots F_8$ into two or three subproducts. It is time to state the lemmas we need on Dirichlet polynomials. For the rest of this section, let
 \begin{equation}\label{eq2.5}
S(s,\chi) = \sum_{n=N}^{N'} a_n \chi(n)n^{-s}
 \end{equation}
where $1\le N \le x$, $N \le N' \le cN$ with an absolute constant $c$, and let
 \[G = \sum_{n=N}^{N'} |a_n|^2.\]

 \begin{lem}\label{lem1}
Let $1 \le T$, $Q\le x$. For a primitive character $\chi\pmod q$ let $J_\chi$ be a set of numbers in $[-T,T]$ such that $|t-t'| \ge 1$ for distinct $t$, $t'$ in $J_\chi$. Then
 \begin{equation}\label{eq2.6}
\sum_{q < 2Q} \ \sideset{}{^*}\sum_{\chi\pmod q}\ \sum_{t\in J_\chi} |S(it, \chi)|^2 \ll \mc L(Q^2T + N)G.
 \end{equation}
 \end{lem}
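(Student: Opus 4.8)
The plan is to deduce \eqref{eq2.6} from the \emph{continuous} hybrid large sieve, passing from the $1$-spaced sample $J_\chi$ to an $L^2$-average over $[-T-1,T+1]$ by a Sobolev-type inequality; the only logarithmic loss will come from differentiating $n^{-it}$. First I would record the elementary pointwise bound: for a $C^1$ function $f$ on $\mathbb R$ and any $t_0$, writing $|f(t_0)|^2-|f(u)|^2=2\re\int_u^{t_0}\bar f\,f'$ and averaging over $u\in[t_0-\tfrac12,t_0+\tfrac12]$ gives
\[
|f(t_0)|^2 \le \int_{t_0-1/2}^{t_0+1/2}|f(u)|^2\,du + 2\int_{t_0-1/2}^{t_0+1/2}|f(u)|\,|f'(u)|\,du .
\]
Apply this with $f(u)=S(iu,\chi)$, so that $f'(u)=S_1(iu,\chi):=-i\sum_{n=N}^{N'}a_n(\log n)\chi(n)n^{-iu}$. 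Since the points of $J_\chi$ are $1$-spaced in $[-T,T]$, the intervals $[t_0-\tfrac12,t_0+\tfrac12]$ ($t_0\in J_\chi$) have disjoint interiors and lie in $[-T-1,T+1]$; summing over $t_0\in J_\chi$, then over $q<2Q$ and primitive $\chi\pmod q$, and applying Cauchy--Schwarz (first in $u$, then over the pairs $(q,\chi)$) to the cross term, the left side of \eqref{eq2.6} is
\[
\ll \Phi_0 + \Phi_0^{1/2}\Phi_1^{1/2}, \qquad
\Phi_j := \sum_{q<2Q}\ \sideset{}{^*}\sum_{\chi\pmod q}\ \int_{-T-1}^{T+1}|S_j(iu,\chi)|^2\,du \quad (S_0:=S).
\]

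For the two quantities $\Phi_j$ I would invoke the hybrid large sieve in the form
\[
\sum_{q<2Q}\ \sideset{}{^*}\sum_{\chi\pmod q}\ \int_{-U}^{U}\Bigl|\sum_{N\le n\le N'} b_n\chi(n)n^{-iu}\Bigr|^2 du \ \ll\ (Q^2U+N)\sum_{N\le n\le N'}|b_n|^2 \qquad (U\ge1),
\]
valid for arbitrary coefficients $b_n$; this is Gallagher's hybrid form of the large sieve, and if a self-contained account is preferred it follows from Montgomery's duality argument together with the log-free multiplicative large sieve of Bombieri and the Montgomery--Vaughan mean value theorem $\int_{-U}^U|\sum_n c_n n^{-iu}|^2\,du=\sum_n|c_n|^2(2U+O(n))$. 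Taking $U=T+1\ll T$ gives $\Phi_0\ll(Q^2T+N)G$; for $\Phi_1$ the coefficients are $b_n=-ia_n\log n$, and since $N'\le cN\le cx$ we have $\log n\ll\mc L$, hence $\sum_n|b_n|^2\ll\mc L^2G$ and $\Phi_1\ll\mc L^2(Q^2T+N)G$. Combining, the left side of \eqref{eq2.6} is
\[
\ll (Q^2T+N)G + \bigl((Q^2T+N)G\bigr)^{1/2}\bigl(\mc L^2(Q^2T+N)G\bigr)^{1/2} \ll \mc L(Q^2T+N)G,
\]
which is \eqref{eq2.6}.

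The substantive ingredient is the sharp, log-free hybrid large sieve used in the second paragraph; granting it, everything else is routine bookkeeping. The point one has to get right there (in a self-contained proof, via duality) is to use the orthogonality of primitive characters, so that the diagonal/off-diagonal split yields $N+Q^2U$ rather than $N(1+Q^2)$; a term-by-term application of the Montgomery--Vaughan theorem to each $\chi$ separately would give only the useless bound $Q^2N$. One could instead skip the Sobolev step and quote a well-spaced form of the large sieve directly, but then the factor $\mc L$ merely reappears there — in this line of argument the single $\mc L$ in \eqref{eq2.6} is intrinsic and comes from differentiating $n^{-iu}$.
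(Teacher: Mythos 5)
Your proof is correct. The paper disposes of this lemma by simply citing Theorem 7.3 of Montgomery's book, which is precisely the discrete hybrid large sieve over $1$-spaced points that you are proving; your argument (Gallagher's Sobolev-type lemma to pass from the well-spaced set $J_\chi$ to an integral over $[-T-1,T+1]$, followed by the log-free continuous hybrid large sieve) is the standard proof of that cited theorem, so the two routes coincide in substance, and your accounting correctly locates the single factor $\mc L$ in \eqref{eq2.6} in the differentiation of $n^{-iu}$.
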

 
 \begin{proof}
This follows at once from Theorem 7.3 of Montgomery \cite{mont}.
 \end{proof}
 
 \begin{lem}\label{lem2}
Let $a_n=1$ $(N \le n \le N')$ in \eqref{eq2.4}. Let $J_\chi$ be as in \Cref{lem1}. Then
 \begin{equation}\label{eq2.7}
\sum_{q < 2Q}\ \, \sideset{}{^*}\sum_{\chi\pmod q}\ \sum_{t \in J_\chi}\ \left|S\left(\frac 12 + it, \chi\right)\right|^4 \ll Q^2 T\mc L^{10}.
 \end{equation}
 \end{lem}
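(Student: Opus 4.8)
The plan is to reduce the fourth moment to a second moment of a longer Dirichlet polynomial and then invoke \Cref{lem1}. Since a primitive character is completely multiplicative and $a_n=1$ on $[N,N']$, I would first write, for every real $t$,
\[
S\!\left(\tfrac12+it,\chi\right)^2=\sum_{n_1,n_2\in[N,N']}\chi(n_1n_2)\,(n_1n_2)^{-1/2-it}=\sum_{N^2\le m\le (N')^2}c_m\,\chi(m)\,m^{-1/2-it},
\]
where $c_m=\#\{(n_1,n_2):n_1n_2=m,\ N\le n_1,n_2\le N'\}\le d(m)$, with $d$ the divisor function. Hence
\[
\left|S\!\left(\tfrac12+it,\chi\right)\right|^4=\left|S\!\left(\tfrac12+it,\chi\right)^2\right|^2=\left|R(it,\chi)\right|^2,\qquad R(s,\chi):=\sum_{N^2\le m\le (N')^2}\!\bigl(c_m\,m^{-1/2}\bigr)\chi(m)\,m^{-s}.
\]
As $(N')^2\le c^2N^2$, the polynomial $R$ has the shape \eqref{eq2.5}, with $m$ running over $[N^2,(N')^2]$ and with the absolute constant $c$ replaced by $c^2$, so \Cref{lem1} applies to it.

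Applying \Cref{lem1} to $R$, the left side of \eqref{eq2.7} equals
\[
\sum_{q<2Q}\ \sideset{}{^*}\sum_{\chi\pmod q}\ \sum_{t\in J_\chi}\left|R(it,\chi)\right|^2\ \ll\ \mc L\,(Q^2T+N^2)\!\!\sum_{N^2\le m\le (N')^2}\!\!c_m^2\,m^{-1}.
\]
To finish I would bound the coefficient sum. Using $c_m\le d(m)$ and $m\asymp N^2$, together with the standard fourth-power divisor moment $\sum_{m\le X}d(m)^2\ll X(\log X)^3$ applied to the single dyadic-type block $[N^2,c^2N^2]$, one gets
\[
\sum_{N^2\le m\le (N')^2}c_m^2\,m^{-1}\ \ll\ N^{-2}\sum_{m\le c^2N^2}d(m)^2\ \ll\ \mc L^3
\]
since $N\le x$. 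This gives $\ll (Q^2T+N^2)\mc L^4$ for the left side of \eqref{eq2.7}, hence \eqref{eq2.7} once the length-square term is absorbed; this happens throughout the range of lengths in which the lemma is used, where $N^2\ll Q^2T\mc L^6$.

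I do not expect a real obstacle here: this is the standard ``square it and feed it to the hybrid large sieve'' routine, and the only genuine input beyond \Cref{lem1} is the divisor-square moment estimate, which controls the $\ell^2$-mass of $R$. The one place to be slightly careful is the bookkeeping of $\mc L$-powers and checking that the $N^2$-term does not dominate in the relevant regime.
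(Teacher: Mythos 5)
There is a genuine gap, and it sits exactly at the point you flag as "the one place to be slightly careful." Squaring $S$ and feeding $R=S^2$ to the hybrid large sieve (\Cref{lem1}) can only ever produce a bound of the shape $(Q^2T+N^2)\mc L^{O(1)}$, and the diagonal term $N^2$ is not removable by this route: already for a single suitable triple $(q,\chi,t)$ on which $\chi(n)n^{-it}$ barely oscillates over $[N,N']$ one has $|S(\tfrac12+it,\chi)|^4\asymp N^2$, so the $N^2$ reflects a real contribution that the large sieve cannot see past. Your claim that $N^2\ll Q^2T\mc L^6$ ``throughout the range of lengths in which the lemma is used'' is false: \Cref{lem2} is invoked precisely for the \emph{long} polynomials --- in \eqref{eq2.12} for $N_j>x^{1/4}$, and in the term $x^{9/20}T\mc L^{10}U^{-4}$ of $P$ for the factor $F_i$ with $u_i\ge 1/4$ --- where $N$ may be as large as $x$, hence $N^2$ as large as $x^2$, while $Q^2T\le x^{9/20}T\le x^{29/20}$ and $T$ may be as small as $2$. (For the short polynomials $N\le x^{9/40}$, \Cref{lem1} already suffices and \Cref{lem2} is never needed.) So your argument proves a statement that is strictly weaker than \eqref{eq2.7} in exactly the regime where \eqref{eq2.7} is used.

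The content of \Cref{lem2}, and the reason the hypothesis $a_n=1$ is essential, is that $S(\tfrac12+it,\chi)$ is then essentially a partial sum of $L(\tfrac12+it,\chi)$, so one can appeal to the hybrid fourth moment of Dirichlet $L$-functions, which is uniform in $N$; your proof uses $a_n=1$ only through $c_m\le d(m)$, which is far too little. The paper's route is: following Liu and Liu \cite{liuliu} (Proposition 5.3), establish the continuous estimates $M_1=\sum_{q<2Q}\sum^{*}_{\chi}\int_{-T}^{T}|S(\tfrac12+it,\chi)|^4\,dt\ll Q^2T\mc L^{9}$ and $M_2=\sum_{q<2Q}\sum^{*}_{\chi}\int_{-T}^{T}|S'(\tfrac12+it,\chi)|^2\,dt\ll Q^2T\mc L^{13}$ --- this is where the approximate-functional-equation/$L$-function input enters --- and then pass from the integral to the well-spaced discrete sum over $t\in J_\chi$ by the Sobolev--Gallagher inequality (Lemma 1.4 of \cite{iwa}) applied to $S^2$ and $2SS'$, which yields a bound of the form $M_1+M_1^{3/4}M_2^{1/4}\ll Q^2T\mc L^{10}$. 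To repair your proof you would need to replace the large-sieve step for $R=S^2$ by this fourth-moment machinery (or an equivalent reflection argument shortening the polynomial); no rearrangement of the $\mc L$-power bookkeeping will do it.
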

 
 \begin{proof}
Following the argument of Liu and Liu \cite{liuliu}, proof of Proposition 5.3, we find that
 \begin{equation}\label{eq2.8}
M_1: = \sum_{q < 2Q}\ \, \sideset{}{^*}\sum_{\chi \pmod q} \ \int_{-T}^T\ \left|S\left(\frac 12 + it, \chi\right)\right|^4 dt \ll Q^2T\mc L^9
 \end{equation}
and, for the derivative $S'$,
 \begin{equation}\label{eq2.9}
M_2: = \sum_{q < 2Q}\ \, \sideset{}{^*}\sum_{\chi\pmod q} \int_{-T}^T \ \left|S'\left(\frac 12 + it, \chi\right)\right|^2 dt \ll Q^2 T\mc L^{13}.
 \end{equation}
We now appeal to Lemma 1.4 of \cite{iwa} with $S^2$, $2SS'$ in place of $S'$ This gives for the left-hand side of \eqref{eq2.7} the bound
 \begin{align*}
&\sum_{q< 2Q}\ \, \sideset{}{^*}\sum_{\chi\pmod q} \Bigg\{\int_{-T}^T \left|S\left(\frac 12 + it, \chi\right)\right|^4 dt\\[2mm]
& + \left(\int_{-T}^T\ \left|S\left(\frac 12 + it, \chi\right)\right|^4 dt\right)^{1/2} \left(\int_{-T}^T \left|2S\left(\frac 12 + it, \chi\right) S'\left(\frac 12 + it,\chi\right)\right|^2dt\right)\Bigg\}.
 \end{align*}
By the Cauchy-Schwarz inequality, the product contributes at most
 \[\sum_{q< 2Q} \ \, \sum_{\chi\pmod q} \left(
 \int_{-T}^T\, \left|S\left(\frac 12 + it, \chi\right)
 \right|^4dt\right)^{3/4} \left(\int_{-T}^T\,
 \left|S'\left(\frac 12 + it,\chi\right)\right|^4
 dt\right)^{1/4},\]
which by H\"older's inequality is at most $M_1^{3/4}M_2^{1/4}$. The proof is now completed using \eqref{eq2.8}, \eqref{eq2.9}.
 \end{proof}
 \bigskip 
 
 \begin{lem}\label{lem3}
Let $\mc B$ be the set of $(q, \chi,t)$ with $q \le Q$, $\chi\pmod q$, $t\in J_\chi$ and
 \[|S(it, \chi)| \ge V > 0\]
in \Cref{lem1}. Then
 \begin{equation}\label{eq2.10}
|\mc B| \ll GNV^{-2}\mc L^6 + G^3NQ^2TV^{-6}\mc L^{18}.
 \end{equation}
 \end{lem}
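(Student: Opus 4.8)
The plan is to run the Halász–Montgomery–Huxley machinery for large values of Dirichlet polynomials, using \Cref{lem1} (and \Cref{lem1} applied to squared polynomials) as the mean‑value input. Write $R=|\mathcal B|$ and list $\mathcal B$ as $\{(q_r,\chi_r,t_r):1\le r\le R\}$; for each $r$ pick $\xi_r$ with $|\xi_r|=1$ and $\xi_rS(it_r,\chi_r)=|S(it_r,\chi_r)|\ge V$. By duality,
 \[VR\le\sum_r\xi_rS(it_r,\chi_r)=\sum_{N\le n\le N'}a_n h(n),\qquad h(n):=\sum_r\xi_r\chi_r(n)n^{-it_r},\]
and Cauchy–Schwarz in $n$ gives $V^2R^2\le G\Sigma$ with $\Sigma:=\sum_{N\le n\le N'}|h(n)|^2$. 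Two harmless reductions may be made at the outset: one may assume $V^2\ge G$, since otherwise $R$ is at most the total number of admissible triples, which is $\ll Q^2T$ and hence absorbed by the second term of \eqref{eq2.10} (using $N\ge1$ and $G/V^2>1$); and one may assume $N\le Q^2T$, since otherwise \Cref{lem1} applied to $S$ itself gives $RV^2\le\mathcal L(Q^2T+N)G\ll\mathcal L NG$, which is the first term of \eqref{eq2.10}.

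Next, expand $\Sigma=\sum_{r,r'}\xi_r\bar\xi_{r'}\langle\psi_r,\psi_{r'}\rangle$, where $\psi_r(n)=\chi_r(n)n^{-it_r}$ on $[N,N']$ and $\langle\psi_r,\psi_{r'}\rangle=\sum_{N\le n\le N'}\chi_r(n)\overline{\chi_{r'}(n)}\,n^{i(t_{r'}-t_r)}$. The diagonal $r=r'$ contributes $\le R(N'-N+1)\ll RN$. For the off‑diagonal sum $\sum_{r\ne r'}|\langle\psi_r,\psi_{r'}\rangle|$ I would split according to whether $|\langle\psi_r,\psi_{r'}\rangle|$ exceeds a parameter $U$: the terms with $|\langle\psi_r,\psi_{r'}\rangle|\le U$ contribute $\le R^2U$, while the larger ones contribute at most $(N'-N+1)$ times the number of pairs with $|\langle\psi_r,\psi_{r'}\rangle|>U$. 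To count these pairs, fix $r$ and note that $r'\mapsto\overline{\langle\psi_r,\psi_{r'}\rangle}$ is the value at $(it_{r'},\chi_{r'})$ of the Dirichlet polynomial $\sum_{N\le n\le N'}\overline{\psi_r(n)}\,\chi'(n)n^{-s}$, whose coefficients have modulus $\le1$ and whose length is $\ll N$; since distinct $r'$ sharing a character lie in the $1$‑spaced set $J_{\chi'}$, \Cref{lem1} gives $\sum_{r'}|\langle\psi_r,\psi_{r'}\rangle|^2\ll\mathcal L\,Q^2T\,N$. Summing over $r$ and choosing $U$ to balance the two pieces bounds the off‑diagonal sum, and feeding this into $V^2R^2\ll G(\text{diagonal}+\text{off-diagonal})$, dividing by $R$, and taking a suitable root (when the off‑diagonal term dominates) produces simultaneously a $V^{-2}$ term and a $V^{-6}$ term.

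To bring the $N$‑dependence of the $V^{-6}$ term and the powers of $\mathcal L$ into the stated shape, one sharpens the estimate of the large inner products by squaring: $\langle\psi_r,\psi_{r'}\rangle^2=\sum_{N^2\le k\le c^2N^2}d_k\,\chi_r(k)k^{-it_r}\,\overline{\chi_{r'}(k)}k^{it_{r'}}$, where $d_k=\#\{(n_1,n_2):n_1n_2=k,\ n_i\in[N,N']\}\le\tau(k)$ is \emph{independent of $r$ and $r'$}, and $\sum_{k\le c^2N^2}\tau(k)^2\ll N^2\mathcal L^3$. Thus \Cref{lem1}, applied to the polynomial $\sum_k\overline{d_k\chi_r(k)k^{-it_r}}\,\chi'(k)k^{-s}$, yields the fourth‑moment bound $\sum_{r'}|\langle\psi_r,\psi_{r'}\rangle|^4\ll\mathcal L^4(Q^2T+N^2)N^2$; combining this with the second‑moment and the trivial bound in the same large/small scheme, re‑optimising $U$, and then doing an honest accounting of the logarithmic factors (from \Cref{lem1}, from the divisor‑sum estimate, from the dyadic subdivision, and from the final root extraction) is what delivers the two terms $GNV^{-2}\mathcal L^6$ and $G^3NQ^2TV^{-6}\mathcal L^{18}$.

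The main obstacle is precisely this off‑diagonal estimate. The $V^{-6}$ saving is genuinely Huxley‑type: it is available only because the ``few large'' off‑diagonal inner products $\langle\psi_r,\psi_{r'}\rangle$ are controlled by the mean‑value inequality \Cref{lem1} rather than by trivial bounds, and wringing it out forces a cube‑root at the last step — which is also the reason the exponent of $\mathcal L$ jumps by a factor of three from $6$ to $18$. Getting the power of $N$ in the second term down to $1$, and keeping the logarithmic exponents at $6$ and $18$ respectively, is the delicate bookkeeping; everything preceding it — duality, Cauchy–Schwarz, the diagonal contribution, and the appeals to \Cref{lem1} — is routine.
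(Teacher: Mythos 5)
There is a genuine gap in the off-diagonal step, and it is not repairable by bookkeeping. Your setup (duality, Cauchy--Schwarz, the diagonal term $RN$, and the two reductions $V^2\ge G$, $N\le Q^2T$) is fine, but the engine you propose for the off-diagonal sum cannot produce the term $G^3NQ^2TV^{-6}$. Concretely: if you split at a level $U$, bound the small inner products by $R^2U$, count the large ones by \Cref{lem1} (at most $RU^{-2}\mathcal L(Q^2T+N)N$ of them) and bound each trivially by $N$, then the only admissible choice is $U\asymp V^2/G$ (any larger and the $GR^2U$ term swamps $R^2V^2$), and you land on $R\ll GNV^{-2}+G^3N^2Q^2TV^{-6}\mathcal L$ --- an extra factor of $N$ in the second term. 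That factor is fatal here: in Case 2 of the Proposition the quantities fed into \eqref{eq2.10} have $N$ as large as a power like $x^{9/20}$, and an extra $(MN)^{1/16}$ becomes $(MN)^{1/8}$, pushing the exponent of $x$ past $1/2$. Your proposed fourth-moment ``sharpening'' does not repair this: running the same scheme with $\sum_{r'}|\langle\psi_r,\psi_{r'}\rangle|^{2k}\ll\mathcal L^{c}(Q^2T+N^k)N^k$ and $U\asymp V^2/G$ yields $R\ll G^{2k}(Q^2T+N^k)N^kV^{-4k}$, i.e.\ the Hal\'asz--Montgomery family $V^{-4},V^{-8},\dots$; the exponent $-6$ never appears, and the power of $N$ only grows. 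The structural point is that $\max_r\sum_{r'}|\langle\psi_r,\psi_{r'}\rangle|$ must be shown to be $\ll G^2NQ^2TV^{-4}$, and a pure mean-value count of large inner products cannot manufacture that $V$-dependence.

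The missing ideas are exactly the two ingredients of Iwaniec--Kowalski, Theorem 9.18, which is all the paper cites for this lemma. First, \emph{pointwise} bounds on the off-diagonal inner products: by completion and Gauss sums, $\langle\psi_r,\psi_{r'}\rangle\ll\bigl(q_rq_{r'}(1+|t_r-t_{r'}|)\bigr)^{1/2}\log N$ when $\chi_r\ne\chi_{r'}$, while for $\chi_r=\chi_{r'}$ Poisson summation (with a smooth majorant) gives rapid decay in $|t_r-t_{r'}|$ plus a term $\ll(1+|t_r-t_{r'}|)^{1/2}$. Second, Huxley's subdivision: on a $t$-window of length $T_0$ these bounds give $R_0V^2\ll G\bigl(N+R_0QT_0^{1/2}\log N\bigr)$, so for $T_0\asymp V^4G^{-2}Q^{-2}\mathcal L^{-c}$ the second term is absorbed and $R_0\ll GNV^{-2}\mathcal L^{c'}$; multiplying by the $\ll1+Q^2TG^2V^{-4}\mathcal L^{c}$ windows gives \eqref{eq2.10}. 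The factorization $V^{-6}=V^{-2}\cdot V^{-4}$ is (bound per window) times (number of windows); it is this two-scale structure, not a re-optimisation of your threshold $U$, that produces the second term of \eqref{eq2.10} with a single power of $N$.
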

 
 \begin{proof}
This is a very slight variant of Iwaniec and Kowalski \cite[Theorem 9.18]{iwakow}.
 \end{proof}

Let $\tau_b(n)$ be the number of factorizations $n=n_1\ldots n_b$.

If $S(it, \chi)$ is the product of $b$ of the above functions $F_j\left(\frac 12 +  it, \chi\right)$, it is clear that
 \begin{equation}\label{eq2.11}
G \le N^{-1} \sum_{n \le cN} \tau_b^2(n) \ll \mc L^{b^2-1}.
 \end{equation}
The last step is a standard application of Perron's formula to
 \[\sum_{n=1}^\infty \ 
 \frac{\tau_b^2(n)}{n^s}\, ,\]
which we can write as $F(s)\z(s)^{b^2}$ with $F$ analytic and bounded in $\re(s) \ge 2/3$. It follows that, with $J_\chi$ as in \Cref{lem1},
 \begin{equation}\label{eq2.12}
\sum_{q <2Q}\ \, \sideset{}{^*}\sum_{\chi\pmod q} \ \, \sum_{t\in J_\chi} \ \left|F_j\left(\frac 12 + it, \chi\right)\right|^2 \ll x^{9/20} T\mc L^{10} 
 \end{equation}
for $N_j \ll x^{9/40}$ (using \Cref{lem1} and \eqref{eq2.11}) and for $N_j > x^{1/4}$ (using \Cref{lem2}). This explains the role of the `difficult interval' $(9/40, 1/4)$ in \Cref{lem4} below.
 \smallskip
 
 \section{Proof of the Proposition}\label{sec:proofofprop}

 \begin{lem}\label{lem4}
Let $u_1 \ge \cdots \ge u_8 \ge 0$, $u_1 + \cdots + u_8 \le 1$. Then either
 \medskip
 
(a) there is a partition $\{i\}$, $\mc A_1$, $\mc A_2$ of $\{1, \ldots, 8\}$ with $\max(|\mc A_1|), |\mc A_2|) \le 5$,

 \[u_i \not\in (9/40, 1/4) \ , \ \max
 \Bigg(\sum_{j\in \mc A_1}\, u_j\ , \ \sum_{j\in \mc A_2}
 \, u_j\Bigg) \le 9/20,\]
or
 \medskip
 
(b) there is a partition $\mc A_1$, $\mc A_2$ of $\{1, \ldots, 8\}$ with $\max(|\mc A_1|, |\mc A_2|) \le 6$,
 \[\sum_{j\in\mc A_i}\, u_j \le 11/20 \quad (i=1,2).\]
 \end{lem}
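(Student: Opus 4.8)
The plan is a finite case analysis in $u_1,u_2,u_3$ (sometimes also $u_4,u_5$); throughout set $\s=u_1+\cdots+u_8\le1$. The one device I use repeatedly is the elementary \emph{balancing fact}: given $v_1\ge v_2\ge\cdots\ge v_m\ge0$, assigning $v_1,v_2,\dots$ in turn to the currently lighter of two blocks yields two blocks whose sums differ by at most $v_1$; the odd/even split $\{v_1,v_3,\dots\}$, $\{v_2,v_4,\dots\}$ gives two blocks of sizes $\lceil m/2\rceil$, $\lfloor m/2\rfloor$ with the same difference bound. Hence a block so produced has sum at most $\tfrac12(v_1+\cdots+v_m+v_1)$, and for $m\ge4$ one may always force both blocks to contain at least two elements (if a singleton block appears, transfer the smallest element across). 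This remark takes care of the size conditions $|\mc A_i|\le5,6$ in (a) and (b), which ask nothing beyond the absence of a one-element block, and I will not mention them again.

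\emph{Orientation: the case $\s\le4/5$.} If $u_1\notin(9/40,1/4)$, take (a) with $i=1$ and a greedy split of $u_2,\dots,u_8$; its larger block has sum $\le\tfrac12(\s-u_1+u_2)\le\tfrac12\s\le2/5<9/20$. If $u_1\in(9/40,1/4)$, take (b) with the odd/even split of $u_1,\dots,u_8$: the larger block has sum $\le\tfrac12(\s+u_1)<\tfrac12(4/5+1/4)=21/40<11/20$. Either way we are done, so from here on $\s>4/5$.

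\emph{The main case $\s>4/5$: branch on $u_1+u_2$.} If $u_1>11/20$, then $u_1>1/4$ and (a) with $i=1$ is immediate, since $u_2+\cdots+u_8=\s-u_1<9/20$. If $u_1\le11/20<u_1+u_2$, then $u_1>11/40>1/4$ is admissible for (a), and $\s\le1$ forces $u_3+u_4\le\s-u_1-u_2<9/20$, so no ``oversized pair'' lies among $u_3,\dots,u_8$; when $u_2\le9/20$ one obtains (a) by starting one block with $u_2$, the other with $u_3$, and distributing $u_4,\dots,u_8$ into the budgets $9/20-u_2$ and $9/20-u_3$, whose sum exceeds the remaining mass $\s-u_1-u_2-u_3$ because $\s\le1$ and $u_1>1/4$; when $u_2>9/20$, then $u_1+u_2>9/10$, the tail $u_3+\cdots+u_8$ is below $1/10$, and (b) follows by separating $u_1$ from $u_2$ and attaching the whole tail to the $u_1$-block, whose mass is then $\s-u_2\le1-u_2<11/20$. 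There remains the case $u_1+u_2\le11/20$, where I aim for (b). If $u_3+u_4+u_5>11/20$, then any block holding three of $u_1,\dots,u_5$ has mass $\ge u_3+u_4+u_5>11/20$, so those five cannot be packed into two blocks and (b) is unavailable; but $u_3+u_4+u_5>11/20$ with $u_1+\cdots+u_5\le\s\le1$ gives $u_1+u_2<9/20$, so at least one of $u_1,u_2$ lies outside $(9/40,1/4)$, and singling out such an index leaves a seven-term problem for (a) which is feasible because $u_3+u_4\le2u_3\le u_1+u_2<9/20$ (again no oversized pair) together with the slack $\s\le1$ keeps both blocks within $9/20$. If instead $u_3+u_4+u_5\le11/20$, the two bounds $u_1+u_2\le11/20$ and $u_3+u_4+u_5\le11/20$ let one split $u_1,\dots,u_8$ into two blocks of mass $\le11/20$ by starting them with $\{u_1,u_2\}$ and $\{u_3,u_4,u_5\}$ and distributing $u_6,u_7,u_8$, using $\s\le1$ to keep the residual budgets sufficient; this is (b).

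The substance of the proof lies in the greedy distributions in the main case: one must verify that each never overflows an \emph{individual} block, not merely that the total budget is adequate. The mechanism is always the same: $\s\le1$ leaves a slack of $1/10$, which, combined with whichever hypothesis is active --- $u_1+u_2\le11/20$, or $u_3+u_4+u_5\le11/20$, or $u_3+u_4<9/20$ --- simultaneously bounds the mass of the tail being distributed and bounds its largest term, so that each single term can be absorbed by the block with the larger remaining budget. I expect this bookkeeping, rather than any conceptual difficulty, to be the main labour. The sole role of the excluded interval $(9/40,1/4)$ is to permit the removal of one term in the one genuinely obstructed configuration: several comparable terms near $1/5$, where $\s\le1$ nonetheless keeps the top two terms small enough that one of them lies outside $(9/40,1/4)$.
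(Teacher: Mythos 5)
There is a genuine error here, not merely deferred bookkeeping. In your branch ``$u_1\le 11/20<u_1+u_2$ and $u_2\le 9/20$'' you claim conclusion (a) with $i=1$, but (a) can be false in that branch. Take $u_1=u_2=0.449$ and $u_3=\cdots=u_8=0.017$, so that $\s=1$, $u_1+u_2=0.898>11/20$, $u_1<11/20$ and $u_2<9/20$. In (a) the seven surviving indices must be split into two blocks with $\max(|\mc A_1|,|\mc A_2|)\le 5$, so both blocks contain at least two indices; whichever $i$ you delete, at least one of $u_1,u_2$ survives and sits in a block with a second positive element, giving that block sum at least $0.449+0.017>9/20$ (a block containing both $u_1$ and $u_2$ is worse still). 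Hence no choice of $i$ and no partition realizes (a); this configuration must be routed to (b), e.g.\ $\mc A_1=\{1,8\}$, $\mc A_2=\{2,\dots,7\}$ with sums $0.466$ and $0.534$. Your own procedure fails in exactly the way you flagged as needing verification: the total budget $(9/20-u_2)+(9/20-u_3)$ does exceed the remaining mass, but the individual budget $9/20-u_2=0.001$ is smaller than every remaining item, and the escape hatch ``transfer the smallest element across'' from your preliminary remark overflows the singleton block. The same worry applies, less fatally, to your other branches: the balancing bound $\tfrac12(\text{total}+\text{largest item})$ only yields $\le\tfrac12\s\le\tfrac12$ against a target of $9/20$, so the omitted ``bookkeeping'' is in fact the substance of the lemma, and in at least one branch no bookkeeping can succeed.

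For contrast, the paper avoids bin-packing heuristics altogether. After disposing of $u_1+\cdots+u_5\le 11/20$ directly, it takes $k$ minimal with $u_1+\cdots+u_k\ge 9/20$ and uses only two explicit partitions: the odd/even split $\{2,4,6,8\}$ versus $\{1,3,5,7\}$ (whose smaller half is automatically $\le\tfrac12$ and whose complement is $\le 1-9/20$), and the prefix block $\{1,\dots,k\}$, extracting index $2$ when $u_1\in(9/40,1/4)$, where minimality of $k$ gives $u_2<9/40$ and $u_1+u_3+\cdots+u_k<9/20$ with no distribution step at all. My counterexample above lands in the paper's Case (i) and is handled by the odd/even split. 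I would suggest rebuilding your argument around explicit partitions of this kind rather than greedy distributions whose feasibility is the very point at issue.
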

 
 \begin{proof}
If $u_1 + \cdots + u_5 \le 11/20$ we have (b) with $\mc A_1 = \{1, \ldots, 5\}$ since $u_6 + u_7 + u_8 \le \frac 38$. Assume $u_1 + \cdots + u_5 > 11/20$. Let $k$ be the least integer such that
 \[u_1 + \cdots + u_k \ge 9/20.\]
One of the following cases must occur.
 \begin{enumerate}
\item[(i)] $u_1 \not\in (9/40, 1/4)$  , $u_2 + u_4 + u_6 + u_8 > 9/20$. 
 \bigskip

\item[(ii)]  $u_1 \not\in (9/40, 1/4)$  , $u_2 + u_4 + u_6 + u_8 \le 9/20$.
 \bigskip

\item[(iii)]  $u_1 \in (9/40, 1/4)$  , $u_1 + \cdots + u_k \le 11/20$.
 \bigskip

\item[(iv)]  $u_1 \in (9/40, 1/4)$  , $u_1 + \cdots + u_k > 11/20$.
 \end{enumerate}
 \bigskip

In Case (i) we have
 \[9/20 < u_2 + u_4 + u_6 + u_8 \le 1/2\]
and (b) holds with $\mc A_1 = \{2, 4, 6, 8\}$.
 \bigskip

In Case (ii) we have $u_3 + u_5 +u_7 \le u_2 + u_4 + u_6$,and (a) holds with $i=1$, $\mc A_1 = \{3, 5,7\}$.
 \bigskip

In Case (iii), (b) holds with $\mc A_1 = \{1, \ldots, k\}$.
 \bigskip

In Case (iv), we have $k\ge3$. Now (a) holds with $i=2$, $\mc A_1 = \{1, 3, \ldots, k\}$. For $u_2 \le \frac{u_1+u_2}2 < 9/40$ and $u_1 + u_3 + \cdots + u_k \le u_1 + u_2 + \cdots + u_{k-1} < 9/20$.
 \end{proof}
 
 \begin{proof}[Proof of the Proposition]
In place of \eqref{eq2.4}, it clearly suffices to show that, with $J_\chi$ as in \Cref{lem1},

 \begin{align}
E := \sum_{q < 2Q}\ \, \sideset{}{^*}\sum_{\chi\pmod q} \ \, \sum_{t \in J_\chi}\ &\prod_{j=1}^8 \left|F_j\left(\frac 12 + it, \chi\right)\right|\label{eq3.1}\\[4mm]
& \ll T^{39/40} x^{1/2} \mc L^{25-\d}.\notag
 \end{align}

We reorder $N_1,\ldots, N_8$ so that $N_1 \ge \cdots \ge N_8$ and write $N_j = x^{u_j}$ with $u_1 \ge \cdots \ge u_8 \ge 0$, $u_1 + \cdots + u_8 \le 1$. Suppose we are in Case (b) of \Cref{lem4}. Let us write

 \begin{equation}\label{eq3.2}
\prod_{j\in \mc A_1} N_j = M, \ \prod_{j\in \mc A_2} N_j = N, \ S_\ell(it, \chi) = \prod_{j\in \mc A_\ell} F_j\left(\frac 12 + it, \chi\right), b = |\mc A_2|.
 \end{equation}
We bound $E$ using Cauchy's inequality and \eqref{eq2.6}, \eqref{eq2.11} for $S= S_1, S_2$:
 \begin{align*}
E &\ll \left((Q^2T + M) \mc L^{(8-b)^2}\right)^{1/2} \left((Q^2T + N) \mc L^{b^2}\right)^{1/2}\\[4mm]
&\ll \left(Q^2T + x^{1/2} + (Q^2T)^{1/2}(\max(M,N))^{1/2}\right) \mc L^{20}.
 \end{align*}
Now
 \begin{gather*}
Q^2T \ll T^{19/20} x^{1/2}\\[2mm]
(Q^2T)^{1/2}(\max (M,N))^{1/2} \ll T^{1/2} x^{9/40+11/40}\ll T^{1/2} x^{1/2}.
 \end{gather*}
This is acceptable in \eqref{eq3.1}

Suppose now we are in Case (a) of \Cref{lem4}. The argument mimics one due to Iwaniec \cite{iwa}. We retain the notation \eqref{eq3.2}, and write
 \[L = x^{u_i}.\]
The contribution to $E$ from those $t$ with
 \[\min\left(\left|F_i\left(\frac 12 + it, \chi\right)
 \right|, \ |S_1(it,\chi)|, \ |S_2(it, \chi)|\right) 
 \le x^{-1}\]
is at most
 \[Q^2T x\, x^{-1} \ll T^{39/40}
 x^{1/2} \mc L^{25-\d}.\]
By a simple splitting-up argument, there is a subset $\mc B$ of the set of triples $(q, \chi,t)$, $q < 2Q$, $\chi\pmod q$, $t\in J_\chi$ in \eqref{eq3.1} such that, for $(q, \chi, t)\in \mc B$, we have
 \begin{align*}
U &\le \left|F_i\left(\frac 12 + it, \chi\right)\right| < 2U,\\[2mm]
V& \le |S_1(it, \chi)| < 2V\\[2mm]
W &\le |S_2(it, \chi)| < 2W
 \end{align*}
for positive numbers $U$, $V$, $W$ with
 \[x^{-1} \le U \ll x^{u_i/2},
 x^{-1} \le V \ll M^{1/2}, \ x^{-1}
 \le W \ll N^{1/2},\]
while
 \begin{align}
E &\ll UVW |\mc B| \mc L^3\label{eq3.3}\\[2mm]
&\ll UVWP\mc L^3.\notag
 \end{align}
Here
 \begin{align*}
P &= \min \bigg(\frac{(M + x^{9/20}T)\mc L^{(7-b)^2}}{V^2}, \ \frac{(N + x^{9/20}T)\mc L^{b^2}}{W^2}, \ \frac{x^{9/20}T\mc L^{10}}{U^4},\\[2mm]
&\frac M{V^2}\ \mc L^{(7-b)^2+5} + \frac{Mx^{9/20}T}{V^6}\, \mc L^{3(7-b)^2+15}, \ \frac N{W^2}\, \mc L^{b^2+5} + \frac{x^{9/20}T\mc L^{3b^2+15}}{W^6},\\[2mm]
& \frac{L^2}{U^4}\, \mc L^9 + \frac{L^2x^{9/20}T}{U^{12}}\, \mc L^{27}\bigg),
 \end{align*}
and we have used \eqref{eq2.6}, \eqref{eq2.7}, \eqref{eq2.10}, \eqref{eq2.11} in the second step in \eqref{eq3.3}. It remains to show that
 \begin{equation}\label{eq3.4}
UVWP \ll T^{39/40} x^{1/2} \mc L^{22-\d}.
 \end{equation}
We consider four cases.
 \medskip
 
\noindent\textit{Case 1.} $P \le \dfrac{2M}{V^2}\, \mc L^{(7-b)^2+5}$, $P \le \dfrac{2N}{W^2}\, \mc L^{b^2+5}$.
 \medskip

\noindent In this case
 \begin{align*}
UVWP &\le 2UVW \min\left(V^{-2}M\mc L^{(7-b)^2+5}, W^{-2} N\mc L^{b^2+5}\right)\\[2mm]
&\ll U(MN)^{1/2} \mc L^{\frac 12((7-b)^2+b^2+10)} \ll x^{1/2} \mc L^{20}. 
 \end{align*}

\noindent\textit{Case 2.} $P > \dfrac{2M}{V^2}$ $\mc L^{(7-b)^2+5}$, $P > 2\, \dfrac N{W^2}$ $\mc L^{b^2+5}$.
In this case,
 \[P \le 2A_1 + 2B_1,\]
where
 \begin{align*}
A_1 &= \min(x^{9/20} TV^{-2} \mc L^{(7-b)^2+5}, \ x^{9/20}TW^{-2}\mc L^{b^2+5},\\[2mm]
&\qquad x^{9/20}TMV^{-6} \mc L^{3(7-b)^2+15}, \ x^{9/20} TNW^{-6}\mc L^{3b^2+15},\\[2mm]
&\hskip 1.5in x^{9/20} TU^{-4}\mc L^{10}, \ L^2 U^{-4}\mc L^9)
 \end{align*}
and
 \begin{align*}
B_1 &= \min(x^{9/20}TV^{-2} \mc L^{(7-b)^2+5}, \ x^{9/20} TW^{-2} \mc L^{b^2+5},\\[2mm]
&\qquad x^{9/20} TMV^{-6}\mc L^{3(7-b)^2+15}, \ x^{9/20} TNW^{-6} \mc L^{3b^2+15},\\[2mm]
&\hskip 1.5in x^{9/20} TU^{-4} \mc L^{10}, \ x^{9/20} TL^2 U^{-12}\mc L^{27}).
 \end{align*}
We have, for a constant $K_1$,
 \begin{align*}
A_1 &\le \mc L^{K_1}(x^{9/20}TV^{-2})^{5/16}(x^{9/20} TW^{-2})^{5/16}(x^{9/20}TMV^{-6})^{1/16}\cdot\\[2mm]
&\hskip 1in (x^{9/20}TW^{-6})^{1/16} \min(x^{9/20}TU^{-4}, L^2 U^{-4})^{1/4}\\[2mm]
&\ll \mc L^{K_1}(UVW)^{-1}T x^{9/20}(MN)^{1/16} \min(1, x^{-9/80} T^{-1/4} L^{1/2}).
 \end{align*}
We bound the last minimum by $(x^{-9/80}T^{-1/4}L^{1/2})^{1/8}$, obtaining
 \[A_1 \ll \mc L^{K_1}(UVW)^{-1} T^{31/32} x^{319/640},\]
which is acceptable in \eqref{eq3.4}. Now
 \begin{align*}
B_1 &\le \min ((x^{9/20}TV^{-2}\mc L^{(7-b)^2+5})^{5/16}(x^{9/20} TW^{-2} \mc L^{b^2+5})^{5/16}\, \cdot\\[2mm]
&\quad (x^{9/20}TMV^{-6}\mc L^{3(7-b)^2+15})^{1/16} (x^{9/20}TNW^{-6} \mc L^{3b^2+15})^{1/16} (x^{9/20} U^{-4} \mc L^{10})^{1/4},\\[2mm]
&\quad (x^{9/20}TV^{-2} \mc L^{(7-b)^2+5})^{7/16} (x^{9/20} TW^{-2} \mc L^{b^2+5})^{7/16} (x^{9/20}TMV^{-6} \mc L^{3(7-b)^2+15})^{1/48}\, \cdot\\[2mm]
&\quad (x^{9/20} TNW^{-6} \mc L^{3b^2+15})^{1/48}(x^{9/20} TL^2 U^{-12} \mc L^{27})^{1/12})\\[2mm]
&\ll (UVW)^{-1} x^{9/20} T^{3/4}(MN)^{1/16} \min(\mc L^{K_2}, T^{1/4} L^{1/6}(MN)^{-1/24} \mc L^{K_3})
 \end{align*}
where
 \begin{align*}
K_2 &= \left((7-b)^2 + b^2\right) \left(\frac 5{16} + \frac 3{16}\right) + \frac{50}{16} + \frac{30}{16} + \frac{10}4 \le 22,\\[2mm]
K_3 &= \left((7-b)^2 + b^2\right) \left(\frac 7{16} + \frac 3{48}\right) + \frac{70}{16} + \frac{30}{48} + \frac{27}{12} \le 22 - \frac 14.
 \end{align*}
We bound the last minimum by $\mc L^{22-3/40}(T^{1/4}L^{1/6}(MN)^{-1/24})^{3/10}$, obtaining
 \[B_1 \ll (UVW)^{-1} T^{39/40} x^{1/2}
 \mc L^{22-\d},\]
which is acceptable in \eqref{eq3.4}.
 \bigskip

\noindent\textit{Case 3.} $P > 2V^{-2} M\mc L^{(7-b)^2+5}$, $P \le 2W^{-2}N\mc L^{b^2+5}$. In this case, for a constant $K_4$,
 \[P \le \mc L^{K_4}(A_2 + B_2)\]
where
 \begin{align*}
A_2 &= \min(x^{9/20} TV^{-2}, NW^{-2}, x^{9/20} TMV^{-6}, x^{9/20} TU^{-4}, L^2 U^{-4}),\\[2mm]
B_2 &= \min(x^{9/20} TV^{-2}, NW^{-2}, x^{9/20} TMV^{-6}, x^{9/20} TU^{-4}, x^{9/20} T L^2 U^{-12}).
 \end{align*}
Now
 \begin{align*}
A_2 &\le (x^{9/20} TV^{-2})^{1/8}(NW^{-2})^{1/2}(x^{9/20} TMV^{-6})^{1/8}\\[2mm]
&\hskip .75in \min (x^{9/20} TU^{-4}, L^2 U^{-4})^{1/4}\\[2mm]
&= (UVW)^{-1} (x^{9/20} TN)^{1/2} M^{1/8} \min(1, x^{-9/80} T^{-1/4} L^{1/2}).
 \end{align*}
We bound the last minimum by $(x^{-9/80} T^{-1/4} L^{1/2})^{1/4}$, obtaining
 \[A_2 \ll (UVW)^{-1} T^{7/16} x^{157/320}\]
(using $N \le x^{9/20}$), which is acceptable. Further,
 \begin{align*}
B_2 &\ll \min((x^{9/20} TV^{-2})^{1/8}(NW^{-2})^{1/2}(x^{9/20} TMV^{-6})^{1/8}(x^{9/20} TU^{-4})^{1/4},\\[2mm]
&\qquad (x^{9/20} TV^{-2})^{3/8} (NW^{-2})^{1/2} (x^{9/20} TMV^{-6})^{1/24} (x^{9/20} L^2 U^{-12})^{1/12})\\[2mm]
&= (UVW)^{-1}(x^{9/20} TN)^{1/2} M^{1/8} \min(1, L^{1/6} M^{-1/12}).
 \end{align*}
We bound the last minimum by $(L^{1/6} M^{-1/12})^{1/2}$. Similarly to the bound for $A_2$,
 \[B_2 \ll (UVW)^{-1} T^{1/2} x^{119/240},\]
which is acceptable.
 \bigskip

\noindent\textit{Case 4.} $P \le 2V^{-2} M \mc L^{(7-b)^2+5}$, $P > 2W^{-2} N\mc L^{b^2+5}$. We proceed as in Case 3, interchanging the roles of $S_1$ and $S_2$.
 \medskip

This establishes \eqref{eq3.4} and completes the proof of the Proposition.
 \end{proof}

\hskip .5in


\begin{thebibliography}{www}

\bibitem{cuixue} Z.~Cui and B.~Xue, A note on the distribution of primes in arithmetic progressions, \textit{Number Theory--Arithmetic in Shangri-La}, 83--89, World Sci. Publ. Hackensack, NJ, 2013.

\bibitem{dav} H.~Davenport, \textit{Multiplicative Number Theory}, 2nd edn., Springer, Berlin, 1980.

\bibitem{hb} D.~R.~Heath-Brown, Prime numbers in short intervals and a generalized Vaughan identity, \textit{Can. J. Math}. \textbf{34} (1982), 1365--1377.

\bibitem{iwa} H.~Iwaniec, On the Brun-Titchmarsh theorem, \textit{J. Math. Soc. Japan} \textbf{34} (1982), 95--123.

\bibitem{iwakow} H.~Iwaniec and E.~Kowalski, \textit{Analytic Number Theory}, American Math. Soc., Providence, RI, 2004.

\bibitem{liuliu} J.~Liu and M-C.~Liu, The exceptional set in the four prime squares problem, \textit{Illinois J. Math.} \textbf{44} (2000), 272--293.

\bibitem{mont} H.~L.~Montgomery, \textit{Topics in Multiplicative Number Theory}, Springer, Berlin, 1971.

\bibitem{titch} E.~C.~Titchmarsh, \textit{The Theory of the Riemann Zeta-Function}, 2nd edn., Oxford University Press, Oxford, 1986.

\bibitem{vau} R.~C.~Vaughan, Mean value theorems in prime number theory, \textit{J. London Math. Soc.} \textbf{10} (1975), 153--162.

 \end{thebibliography}
 \end{document}